\newtheorem{definition}{Definition}[section]
\newtheorem{proposition}[definition]{Proposition}
\newtheorem{theorem}[definition]{Theorem}
\newtheorem{conjecture}[definition]{Conjecture}
\newenvironment{proof}{\par {\sc {\bf Proof.}\hskip 5pt}}{\hfill \qed \par}
\newcommand{\undertilde}[1]{\ensuremath{\mathord{\vtop{\ialign{##\crcr
   $\hfil\displaystyle{#1}\hfil$\crcr\noalign{\kern1.5pt\nointerlineskip}
   $\hfil\tilde{}\hfil$\crcr\noalign{\kern1.5pt}}}}}}
\journal{Computers \& Operations Research}
\begin{document}

\begin{frontmatter}

\title{A New Heuristic for Detecting Non-Hamiltonicity in Cubic Graphs}
\author[jf]{Jerzy A Filar}
\ead{jerzy.filar@flinders.edu.au}
\author[jf]{Michael Haythorpe\corref{cor1}}
\ead{michael.haythorpe@flinders.edu.au}
\author[jf]{Serguei Rossomakhine}
\ead{serguei.rossomakhine@flinders.edu.au}
\cortext[cor1]{Corresponding author: Michael Haythorpe. Ph: +61 8 820 12375.}
\address[jf]{Flinders University, Sturt Road, Bedford Park SA 5042, Australia.}

\begin{abstract}We analyse a polyhedron which contains the convex hull of all Hamiltonian cycles of a given undirected connected cubic graph. Our constructed polyhedron is defined by polynomially-many linear constraints in polynomially-many continuous (relaxed) variables. Clearly, the emptiness of the constructed polyhedron implies that the graph is non-Hamiltonian. However, whenever a constructed polyhedron is non-empty, the result is inconclusive. Hence, the following natural question arises: if we assume that a non-empty polyhedron implies Hamiltonicity, how frequently is this diagnosis incorrect? We prove that, in the case of bridge graphs, the constructed polyhedron is always empty. We also demonstrate that some non-bridge non-Hamiltonian cubic graphs induce empty polyhedra as well. We compare our approach to the famous Dantzig-Fulkerson-Johnson relaxation of a TSP, and give empirical evidence which suggests that the latter is infeasible if and only if our constructed polyhedron is also empty. By considering special edge cut sets which are present in most cubic graphs, we describe a heuristic approach, built on our constructed polyhedron, for which incorrect diagnoses of non-Hamiltonian graphs as Hamiltonian appear to be very rare. In particular, for cubic graphs containing up to 18 vertices, only four out of 45,982 undirected connected cubic graphs were so misdiagnosed. By constrast, we demonstrate that an equivalent heuristic, when built on the Dantzig-Fulkerson-Johnson relaxation of a TSP, is mostly unsuccessful in identifying additional non-Hamiltonian graphs. These empirical results suggest that polynomial algorithms based on our constructed polyhedron may be able to correctly identify Hamiltonicity of a cubic graph in all but rare cases.\end{abstract}

\begin{keyword}

Hamiltonian cycles \sep Linear Feasibility \sep Traveling Salesman Problem \sep Polyhedra

\MSC 05C85 \sep 90C05 \sep 52B12



\end{keyword}

\end{frontmatter}


\vspace*{-0.5cm}
\section{Introduction}\label{sec-Introduction}
\vspace*{-0.3cm}

The {\em Hamiltonian cycle problem} (HCP) is a well-known problem that features prominently in
complexity theory because it is {\em NP-complete} \cite{gareyjohnson}. The HCP can be stated simply: given
a graph $\Gamma$ containing $N$ vertices, determine whether $\Gamma$ contains a simple cycle of length $N$,
or not. Such a cycle is called a {\em Hamiltonian cycle}. Graphs containing at least one Hamiltonian cycle are called {\em Hamiltonian} graphs, and those containing no Hamiltonian cycles are called {\em non-Hamiltonian} graphs. There are many specialised heuristics which attempt to solve HCP, which include rotational transformation algorithms, cycle extension algorithms, long path algorithms, low degree vertices algorithms, multipath search and pruning algorithms. Attempts to solve HCP have also been made by operations research or optimisation communities, such as nonlinear optimisation (e.g. see Filar et al \cite{walter}) and importance sampling
(e.g. see Eshragh et al \cite{ali}). The HCP is also closely related to the famous Travelling Salesman Problem (TSP), which is simply the problem of finding the Hamiltonian cycle of optimal length. In the language of the TSP, Hamiltonian cycles are usually called {\em tours}.

A cubic graph is one in which every vertex has degree three. The HCP is known to be NP-complete even if only undirected cubic
graphs are considered. By assuming cubicity, there is much inherent graph structure that can be taken advantage of by
algorithms (e.g. see Eppstein \cite{eppstein}). Indeed, there is still a lot of interest in special properties of not only all cubic graphs,
but even special classes of cubic graphs (e.g. see Horev et al \cite{horev}).

More generally, in literature, there have been many approaches towards developing polyhedral sets whose extreme points correspond to solutions of interest. In the case of both TSP and HCP, such a polyhedron is the convex hull of points that are in 1-to-1 correspondence to Hamiltonian cycles (or tours). Let us denote such a polyhedron by $Q := Q(\Gamma)$ for a given graph $\Gamma$. Of course, $Q = \emptyset$ when $\Gamma$ is non-Hamiltonian. In the TSP literature, some of the most successful theories and algorithms have been based on characterisations of facets of $Q$ (e.g. see Gr\"{o}tschel and Padberg \cite{groetschel}). In the context of HCP, however, explicit identification of a Hamiltonian cycle is not necessary. Indeed, Hamiltonicity is equivalent to the determination that $Q \ne \emptyset$. This paper is motivated by the obvious observation that, if a set $\mathcal{P} \supset Q$ is empty, then certainly $Q$ is empty as well. Of course, the challenge is to construct a polytope $\mathcal{P}$ that is so close to $Q$ to increase the chances of successful detection of non-Hamiltonicity. In this paper, we construct such a set $\mathcal{P}$ -- determined by a polynomially-bounded number of linear constraints in continuous (relaxed) variables -- that appears to possess the preceding inclusion property for a vast majority of non-Hamiltonian cubic graphs.

The approaches given in this manuscript differ from the more traditional approach of identifying facet-inducing cuts in that we attempt to determine {\em non-Hamiltonicity} by forcing $\mathcal{P}$ to become empty, rather than by seeking to eliminate subtours in an iterative fashion. Although the final approach given in this manuscript is an iterative procedure, none of the iterates are based upon the result of a previous iteration. Rather, for a given graph, we will outline a series of tests that can be identified in advance, and the failure of any of those tests guarantees non-Hamiltonicity.

Cubic graphs represent a natural test laboratory for this methodology, not only because HCP is still NP-complete, but also because of the availability of reliable public-domain generators (e.g. see Meringer \cite{genreg}) that are capable of efficiently enumerating all nonequivalent connected cubic graphs of a given size. Our construction of the polyhedron $\mathcal{P}$ is achieved in two stages. First, we propose a \lq\lq base model", containing constraints which are generic for all graphs. Secondly, we add in additional constraints in an iterative fashion whenever certain structures are present in the graph. Importantly, however, these structures can be identified by preprocessing requiring only polynomial-time algorithms.

Specifically, the structures that we search for are those that identify \lq\lq brittle points" in the graph. Recently, in Baniasadi et al \cite{genetictheory,genetictheory2}, it was demonstrated that the set of all connected cubic graphs can be separated into two disjoint subsets, namely {\em genes} and {\em descendants}. The key distinction between these two subsets is the presence (or absence) of special edge cut sets known as {\em cubic crackers}. The following two definitions are paraphrased from Baniasadi et al. \cite{genetictheory2}.

\begin{definition}In a cubic graph, a {\em $k$-cracker} $c_k$ is an edge cut set of cardinality $k$ containing no adjacent edges, such that no proper subset of $c_k$ is also an edge cut set. A {\em cubic cracker} is a cracker with cardinality no greater than 3.\label{def-crackers}\end{definition}

\begin{definition}A {\em gene} is a connected cubic graph that contains no cubic crackers, and a descendant is a connected cubic graph that contains at least one cubic cracker.\label{def-gene}\end{definition}

In Baniasadi et al. \cite{genetictheory,genetictheory2}, crackers were used to develop a decomposition theory for cubic graphs that exploits genes and crackers. That theory does not include any algorithmic results for identifying non-Hamiltonicity. However, the results of this manuscript provide strong empirical evidence that cubic crackers contain much information about the Hamiltonicity of a graph. Specifically, consideration
of the cubic crackers can very often be used to detect {\em non-Hamiltonicity}, if it is present. The method presented in this manuscript
either finds that a cubic graph is definitely non-Hamiltonian, or returns an inconclusive result. Since the majority of connected cubic graphs are Hamiltonian \cite{robinson}, the latter outcome is obviously the most common, and by itself does not provide certainty that the graph is Hamiltonian. However, we will demonstrate that the number of false positives (that is, the number of non-Hamiltonian graphs returning an inconclusive result) is extremely low whenever our iterative procedure is used. Since determining whether a polyhedron, defined by polynomially-many linear constraints in polynomially-many continuous variables is empty, can be done by linear programming, this is a problem of polynomial complexity. Hence, the results presented in this manuscript serve to suggest that polynomial algorithms may be able to correctly identify non-Hamiltonicity of cubic graphs in the vast majority of cases.

Since non-Hamiltonian genes, by definition, do not contain any cubic crackers, the constraints based on the latter cannot be expected to be successful for these graphs. Indeed, they return an inconclusive result in all cases tested to date. Such non-Hamiltonian genes are called {\em mutants}. It was conjectured in \cite{genetictheory2} that mutants are extremely rare - indeed, only three such graphs containing 18 or fewer vertices exist. Despite their rarity, their identification may still be considered important; one justification could be that the set of mutants is a superset to the more famous set of (nontrivial) \emph{Snarks} \cite{gardner}. For this reason an additional heuristic is suggested at the end of this manuscript that succeeds in correctly identifying two of the three aforementioned graphs as non-Hamiltonian.

A famous related approach is the, now classical, Dantzig-Fulkerson-Johnson relaxation of the TSP with subtour constraints (e.g., see Dantzig et al. \cite{dantzig} and Cook et al \cite{cook}), which we will henceforth refer to as the DFJ relaxation. The constraints of that model, defined in terms of a given graph, also induce a polyhedron $\mathcal{P}_c \supset \mathcal{Q}$. Then, if the polyhedron $\mathcal{P}_c$ is empty, the graph is definitely non-Hamiltonian. Although there are exponentially many subtour constraints, it is possible to determine whether the polyhedron is empty in polynomial time using cutting plane techniques. We will compare the performance of the DFJ relaxation with the method proposed in this manuscript, in terms of the proportion of non-Hamiltonian graphs identified.

Three previous attempts to construct a desirable polyhedron $\mathcal{P} \supset Q$ were included in the recent PhD theses of Haythorpe \cite{mikethesis} and Eshragh \cite{alithesis}, and in Avrachenkov et al \cite{avrachenkov}. In Haythorpe \cite{mikethesis}, two of these polyhedra $\mathcal{P}$ in variables corresponding to arcs were constructed, it was conjectured (based on empirical evidence) that both polyhedra are empty for any cubic bridge graph. Bridge graphs are always non-Hamiltonian, and it was conjectured in \cite{conjecturepaper} that, asymptotically, almost all non-Hamiltonian graphs are bridge graphs.
However, no other non-Hamiltonian graphs were detected using either of the polyhedra in Haythorpe \cite{mikethesis}.

In Eshragh \cite{alithesis}, and later in Avrachenkov et al \cite{avrachenkov}, a related but somewhat different polyhedron was constructed. New variables corresponding not only to arcs in a graph, but also to positional information, were introduced. These variables have a probabilistic interpretation - a variable $x^k_{r,ia}$ can be thought of as the probability that arc $(i,a)$ is selected at the $r$-th step in a Hamiltonian cycle beginning at vertex $k$. By convention, if vertex $a$ is the first vertex visited in the Hamiltonian cycle after vertex $k$, then arc $(k,a)$ is said to be the $0$-th step of that Hamiltonian cycle. Clearly, in a solution corresponding to a Hamiltonian cycle, the variables must either take values 0 or 1. However, to retain linearity, this binary requirement was relaxed to allow $x^k_{r,ia}$ to simply take continuous values in $[0,1]$. The polyhedron given in these variables was again demonstrated empirically to be empty for any cubic bridge graph. In addition, a small number of non-bridge non-Hamiltonian graphs also generated empty polyhedra. These results motivated the further analysis and development of this approach, detailed in this manuscript.

This paper is structured as follows. In Section \ref{sec-bm} we prove that our constructed polyhedron is empty whenever induced by a cubic bridge graph. We also demonstrate empirically that this polyhedron constitutes an LP relaxation that is at least as strong as the DFJ relaxation when induced by cubic graphs. In Section \ref{sec-cc} we introduce additional constraints based on cubic crackers that permit us to correctly identify the Hamiltonicity of nearly all tested cubic graphs (over 40,000). By contrast, when an equivalent approach based on the DFJ relaxation was applied, very little improvement over their standard model was detected. It thus appears that the variables $x^k_{r,ia}$ can be used to capture valuable positional information that is not so easily captured by the variables used in the classical models. Finally, in the Appendix, we supply a list of further constraints which did not prove useful in our experimentation but may, nonetheless, be useful in further reducing the constructed polyhedron.

\section{Base HCP Feasibility Problem}\label{sec-bm}

Consider a connected undirected cubic graph $\Gamma$ containing $N$ vertices. For ease of notation, we denote the set of vertices that can be reached from a given vertex $i$ in a single step by $\mathcal{A}(i)$. So if arc $(i,a)$ exists in $\Gamma$, we say that $a \in \mathcal{A}(i)$. Similarly to the model introduced in Eshragh \cite{alithesis} and Avrachenkov et al \cite{avrachenkov}, we shall work in the space of vectors ${\bf x} = \{x^k_{r,ia} \quad | \quad k = 1, \hdots, N;\;\;r = 0, \hdots, N-1;\;\;i = 1, \hdots, N;\;\;a \in \mathcal{A}(i)\}.$ If $h$ is a given Hamiltonian cycle in $\Gamma$, it uniquely identifies a vector ${\bf x}_h$ whose entries $x^k_{r,ia}$ are equal to 1 whenever $(i,a)$ is the $r$-th arc on $h$, assuming $k$ to be the starting vertex, and all other entries $x^k_{r,ia}$ are equal to 0. The convex hull of such ${\bf x}_h$ (corresponding to all Hamiltonian cycles of $\Gamma$) forms the polytope $Q$ introduced in the previous section. Of course, we do not want to demand that entries of ${\bf x}$ are binary, and instead work with the relaxation that $0 \leq x^k_{r,ia} \leq 1$; $\forall k, r, i, a$. Hence, if we also demand that $\sum\limits_{i = 1}^N\sum\limits_{a \in \mathcal{A}(i)} x^k_{r,ia} = 1$, then the variable $x^k_{r,ia}$ can be interpreted as the probability that arc $(i,a)$ is selected as the $r$-th step in a Hamiltonian cycle beginning at vertex $k$. Although these variables allow for 0-1 probability corresponding to $N$ different Hamiltonian cycles (one starting from each vertex), the intention is for the probabilities to correspond to a single Hamiltonian cycle $N$ times, with the different starting vertex being the only distinction.

We now introduce the base polyhedron $\mathcal{P}_b \supset Q$, defined as the set of feasible solutions to the following family of linear constraints:

\begin{eqnarray}\sum_{a \in \mathcal{A}(i)} x^k_{r,ia} - \sum_{a \in \mathcal{A}(i)} x^k_{r-1,ai} & = & 0, \qquad \forall k = 1, \hdots, N;\quad r = 1, \hdots, N-1;\quad i = 1, \hdots, N,\nonumber\\
& & \label{c1}\\
\sum_{a \in \mathcal{A}(i)} x^k_{r,ia} - \sum_{a \in \mathcal{A}(k)} x^i_{N-r,k,a} & = & 0, \qquad \forall k = 1, \hdots, N;\quad r = 1, \hdots, N-1;\quad i = 1, \hdots, N,\nonumber\\
 & & \label{ex1}\\
\sum_{r = 0}^{N-1} x^k_{r,ia} - \sum_{r = 0}^{N-1} x^j_{r,ia} & = & 0, \qquad \forall j,k = 1, \hdots, N, j \neq k;\quad (i,a) \in \Gamma,\label{c2}\\
\sum_{k = 1}^N x^k_{r,ia} - \sum_{k = 1}^N x^k_{s,ia} & = & 0, \qquad \forall r,s = 0, \hdots, N-1, r \neq s;\quad (i,a) \in \Gamma,\label{ex2}\\
\sum_{r = 0}^{N-1} \sum_{a \in \mathcal{A}(i)} x^k_{r,ia} & = & 1, \qquad \forall k = 1, \hdots, N;\quad i = 1, \hdots, N,\label{c3}\\
\sum_{k = 1}^N \sum_{a \in \mathcal{A}(i)} x^k_{r,ia} & = & 1, \qquad \forall r = 0, \hdots, N-1;\quad i = 1, \hdots, N,\label{c4}\\
x^k_{0,ia} & = & 0, \qquad \forall k = 1, \hdots, N;\quad i \neq k;\quad (i,a) \in \Gamma,\label{c5}\\
x^k_{r,ia} & \geq  & 0, \qquad \forall k = 1, \hdots, N;\quad r = 0, \hdots, N-1;\quad (i,a) \in \Gamma.\label{c6}\end{eqnarray}

The next proposition shows that constraints (\ref{c1})--(\ref{c6}) are consistent whenever $\Gamma$ is Hamiltonian.

\begin{proposition}If $\Gamma$ is Hamiltonian, $\mathcal{P}_b \neq \emptyset$. Indeed, ${\bf x}_h \in \mathcal{P}_b$ whenever $h$ is a Hamiltonian cycle in $\Gamma$.\label{prop-ham_satis}\end{proposition}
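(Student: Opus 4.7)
The plan is to take the candidate vector $\mathbf{x}_h$ defined in the paragraph preceding the proposition and verify, constraint by constraint, that each of (\ref{c1})--(\ref{c6}) holds. Since every entry of $\mathbf{x}_h$ is either $0$ or $1$, each sum appearing in the constraints counts particular arcs on cyclic rotations of $h$, so the proof reduces to a bookkeeping exercise once appropriate notation is set up.

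Concretely, I would fix a Hamiltonian cycle $h$ and for each starting vertex $k$ write the cyclic rotation of $h$ beginning at $k$ as the sequence $k = v_0^k, v_1^k, \ldots, v_{N-1}^k$, closed by the arc of index $N-1$ returning to $k$. Let $r_i^k$ denote the unique step at which vertex $i$ is visited from $k$, so $r_k^k = 0$ and $r_i^k \in \{1,\ldots,N-1\}$ for $i \neq k$. With this notation, constraint (\ref{c6}) is trivial, constraint (\ref{c5}) holds because only the $0$-th arc can start at $k$, and constraint (\ref{c3}) holds because $i$ is exited exactly once along $h$. Constraint (\ref{c1}) is a discrete flow-conservation identity: for $r \geq 1$, both sides equal $1$ exactly when $r = r_i^k$ (the $r$-th arc out of $i$ is then $(v_r^k, v_{r+1}^k)$ and the $(r-1)$-th arc into $i$ is $(v_{r-1}^k, v_r^k)$) and vanish otherwise. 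The restriction $r \geq 1$ together with (\ref{c5}) correctly handles the boundary case $i = k$, where $k$ is exited at step $0$ and re-entered at step $N-1$.

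The remaining constraints (\ref{ex1}), (\ref{c2}), (\ref{ex2}), (\ref{c4}) are symmetry statements relating different starting vertices or different step indices. The key observation is that changing the starting vertex from $k$ to $i$ cyclically shifts every step index by $-r_i^k$ modulo $N$; in particular $r_k^i = N - r_i^k$ whenever $i \neq k$. From this one-line identity, (\ref{ex1}) follows immediately. Constraint (\ref{c2}) reflects the fact that each arc of $h$ contributes mass $1$ to $\sum_r x^k_{r,ia}$ for every $k$ (it is traversed exactly once in every rotation) and mass $0$ otherwise. Constraint (\ref{ex2}) says that, as $k$ ranges over all $N$ vertices, the step at which a fixed arc $(i,a) \in h$ is traversed takes each value in $\{0,\ldots,N-1\}$ exactly once, so $\sum_k x^k_{r,ia}$ is independent of $r$. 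Constraint (\ref{c4}) follows from the same count: for each fixed $i$ and $r$ there is exactly one $k$ for which $r_i^k = r$, namely the vertex that lies $r$ positions before $i$ along $h$. I do not anticipate any genuine obstacle; the only mild subtleties are the wraparound effects at $r = 0$ and $r = N-1$, which are dealt with by the boundary condition (\ref{c5}) and the index restrictions in (\ref{c1}) and (\ref{ex1}).
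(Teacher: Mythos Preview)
Your proposal is correct and follows essentially the same approach as the paper: both proceed constraint by constraint, verifying that the $0$--$1$ vector $\mathbf{x}_h$ satisfies each of (\ref{c1})--(\ref{c6}) by interpreting the sums as indicators for positions along the cycle. The only difference is cosmetic: you introduce explicit notation $r_i^k$ and the cyclic-shift identity $r_k^i = N - r_i^k$, whereas the paper argues each case in informal probabilistic language; the underlying reasoning is identical.
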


\begin{proof}Consider a Hamiltonian cycle $h$ in the graph. Corresponding to this Hamiltonian cycle is a setting of the variables ${\bf x}_h$ containing entries $x^k_{r,ia}$ such that each variable is either 1 or 0. Then, we interpret $\sum\limits_{a \in \mathcal{A}(i)} x^k_{r,ia}$ as the probability that vertex $i$ is the $r$-th vertex in a Hamiltonian cycle starting at vertex $k$ (by convention, we say that vertex $k$ is the $0$-th vertex in the cycle). Clearly for the Hamiltonian cycle, this expression will also be either 1 or 0.

We consider each constraint separately:

\begin{itemize}\item[$\bullet$] For any given constraint in (\ref{c1}), the first term is either 1 or 0. If it is 1, it implies that vertex $i$ is the $r$-th step in the Hamiltonian cycle starting at vertex $k$. Then, it is clear that vertex $i$ must be entered in the previous step, and therefore the second term must also be 1. Using similar reasoning, if the first term is 0, the second term must also be 0. In either case, the constraint is satisfied.

\item[$\bullet$] For any given constraint in (\ref{ex1}), the first term is either 1 or 0. If it is 1, it implies that vertex $i$ is the $r$-th step in the Hamiltonian cycle starting at vertex $k$. Then, it is clear that in a further $N-r$ steps vertex $k$ would be departed from again. Therefore, starting from vertex $i$, the cycle departs from vertex $k$ after $N-r$ steps, and therefore the second term must also be 1. Using similar reasoning, if the first term is 0, the second term must also be 0. Either way, the constraint is satisfied.

\item[$\bullet$] For any given constraint in (\ref{c2}), the first term corresponds to the probability that an arc $(i,a)$ is chosen at some stage in a Hamiltonian cycle starting at vertex $k$. Clearly, this term is either 1 or 0. If it is 1, then the same Hamiltonian cycle starting at a different vertex must still travel through this arc, and therefore the second term must also be 1. Using similar reasoning, if the first term is 0, the second term must also be 0. In either case, the constraint is satisfied.

\item[$\bullet$] For any given constraint in (\ref{ex2}), the first term corresponds to the probability that there is some starting vertex such that arc $(i,a)$ is the $r$-th step in a Hamiltonian cycle starting at that vertex. Clearly, this term is either 1 or 0. If it is 1, then for any other step $s$ there must be some starting vertex such that arc $(i,a)$ is the $s$-th step, and therefore the second term must also be 1. Using similar reasoning, if the first term is 0, the second term must also be 0. Either way, the constraint is satisfied.

\item[$\bullet$] For any given constraint in (\ref{c3}), the left hand side corresponds to the sum of probabilities that a particular vertex is selected at each possible step on a Hamiltonian cycle starting at a particular vertex. Since a Hamiltonian cycle passes through every vertex exactly once, this sum of probabilities must be 1, and the constraint is satisfied.

\item[$\bullet$] For any given constraint in (\ref{c4}), the left hand side corresponds to the sum of probabilities that a particular vertex is selected at a particular step on a Hamiltonian cycle starting at each possible starting vertex. Since the $N$ Hamiltonian cycles are all equivalent, this can only occur from a single starting vertex, but must occur once. Therefore, the sum of probabilities is 1, and the constraint is satisfied.

\item[$\bullet$] For any given constraint in (\ref{c5}), the left hand side corresponds to the probability that a vertex is departed at the very start of a Hamiltonian cycle starting at a different vertex. Clearly, this cannot happen, and so the probability is 0 and the constraint is satisfied.

\item[$\bullet$] For any given constraint in (\ref{c6}), we know that all the $x^k_{r,ia}$ corresponding to a Hamiltonian cycle are either 1 or 0, so they trivially satisfy the nonnegativity constraint.
\end{itemize}

Therefore, the setting of 0-1 values for ${\bf x}_h$ corresponding to any Hamiltonian cycle $h$ satisfies all the constraints. Then, any graph containing a Hamiltonian cycle is guaranteed to have feasible solutions.\end{proof}

Constraints (\ref{c1})--(\ref{c6}) represent obvious requirements of Hamiltonian cycles, however it is possible that $\mathcal{P}_b \neq \emptyset$ even when $\Gamma$ does not possess any Hamiltonian cycles. Indeed, for the famous 10-vertex Petersen graph \cite{holton} this is the case. However, it is natural to ask whether $\mathcal{P}_b = \emptyset$ for some natural subclass of non-Hamiltonian graphs. It is well-known that any graph containing a bridge is non-Hamiltonian, and such graphs appears to be the most common cubic non-Hamiltonian graphs \cite{conjecturepaper}. The following theorem proves that any graph containing a bridge is unable to satisfy the constraints. Consider a cubic bridge graph $\Gamma$ with $N$ vertices, containing a bridge $(u,v)$ that separates $\Gamma$ into two components $U$ and $V$, as displayed in Figure \ref{fig-bridge}. Clearly, $|U| + |V| = N$.

\vspace*{1cm}\begin{figure}[h!]\begin{center}\includegraphics[scale=0.5]{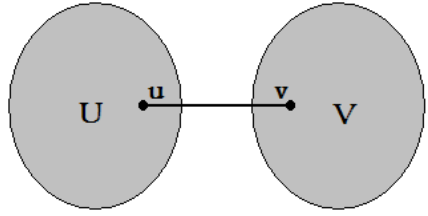}\caption{A bridge graph containing bridge $(u,v)$\label{fig-bridge}}\end{center}\end{figure}

\vspace*{0.5cm}\begin{theorem}Consider cubic graph $\Gamma$. Then, $\mathcal{P}_b = \emptyset$.\label{thm-bridge}\end{theorem}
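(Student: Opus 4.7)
My plan is to compare the $V$-mass trajectory of the cycle-start $k = u$ against that of any $k \in U \setminus \{u\}$ and derive a numerical contradiction. Throughout, I will write $b^k_r := x^k_{r, uv}$, $c^k_r := x^k_{r, vu}$, and $V^k_r := \sum_{i \in V}\sum_a x^k_{r, ia}$. First I would record three easy ingredients: (i) by (\ref{ex2}) the sum $\sum_k b^k_r$ is constant in $r$, and by (\ref{c2}) the sum $\sum_r b^k_r$ is constant in $k$; matching the two double sums shows both equal a common value $\gamma$, with the same statement for $c$; (ii) summing the flow balance (\ref{c1}) over $i \in U$ and using that $(u,v)$ is the only inter-component edge yields the telescoping identity $V^k_r = V^k_0 + \sum_{s<r}(b^k_s - c^k_s)$; (iii) summing (\ref{c3}) over $i \in V$ gives $\sum_r V^k_r = |V|$ for every $k$.

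The key step is a \emph{concentration} phenomenon for the $u$-cycle. Constraint (\ref{c5}) kills $b^k_0$ for every $k \neq u$, so $\sum_k b^k_0 = \gamma$ collapses to $b^u_0 = \gamma$; combined with $\sum_r b^u_r = \gamma$ and (\ref{c6}), this pins $b^u_r = 0$ for every $r \geq 1$. To handle the closing step I would apply (\ref{ex1}) at $(r, i, k) = (N-1, v, u)$: the right-hand side $\sum_a x^v_{1, ua}$ simplifies via (\ref{c1}) and (\ref{c5}) to $x^v_{0, vu}$, and the very same (\ref{ex2}) together with (\ref{c5}) applied to the arc $(v,u)$ at $r = 0$ forces $x^v_{0, vu} = \gamma$. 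Since $v$ is the only $V$-neighbour of $u$, (\ref{ex1}) also kills $\sum_a x^u_{N-1, ia}$ for every other $i \in V$, so $V^u_{N-1} = \gamma$. Substituting into the telescoped formula at $r = N-1$ gives $\sum_{s=0}^{N-2} c^u_s = 0$, whence $c^u_s = 0$ for $s \leq N-2$ and $c^u_{N-1} = \gamma$. It follows that $V^u_r = \gamma$ for \emph{every} $r \in \{1, \ldots, N-1\}$, and $\sum_r V^u_r = |V|$ becomes the rigid equation $(N-1)\gamma = |V|$.

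Finally I would apply the same machinery at a second starting vertex to close the argument. Cubicity together with the bridge hypothesis forces the two other neighbours of $u$ to lie in $U$, so $|U| \geq 3$ and I may pick $k \in U \setminus \{u\}$. Then $V^k_0 = 0$, and the (\ref{ex1}) closure identity applied at $k$ — with $i \in V$, noting $i \not\sim k$ because $k$ has no $V$-neighbour — gives $V^k_{N-1} = 0$. Combined with the trivial bound $V^k_r \leq \sum_{s<r} b^k_s \leq \gamma$, summing over $r$ yields $|V| \leq (N-2)\gamma$. Together with $(N-1)\gamma = |V|$ this forces $\gamma \leq 0$, hence $\gamma = 0$ and $|V| = 0$, contradicting the fact that $|V| \geq 3$ in every cubic bridge graph.

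The hard part of the proof is spotting the concentration step: once one notices that the aggregate identities (\ref{ex2}) and (\ref{c2}), together with the support condition (\ref{c5}) and its dual supplied by (\ref{ex1}), collapse all of the $u$-cycle's bridge traffic onto the two extreme steps $r = 0$ and $r = N-1$, the rigid equation $(N-1)\gamma = |V|$ drops out and the contradiction is immediate. Justifying the closure claim $V^k_{N-1} = 0$ at a generic $k$ without a direct cycle-closure axiom — for which (\ref{ex1}) is exactly the right tool — is a secondary subtlety.
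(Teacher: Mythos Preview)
Your argument has a real gap at the step ``forces $x^v_{0,vu} = \gamma$''. The constant $\gamma$ you introduced is attached to the arc $(u,v)$: from (\ref{ex2}) and (\ref{c5}) you correctly obtain $\gamma = \sum_k x^k_{0,uv} = x^u_{0,uv}$. But the reverse arc $(v,u)$ carries its \emph{own} constant, say $\gamma'$, and the same reasoning applied to it yields only $x^v_{0,vu} = \gamma'$. Nothing in (\ref{c2}) or (\ref{ex2}) links the two directed arcs $(u,v)$ and $(v,u)$, and in fact the paper's own derivation (under the feasibility hypothesis) produces $x^u_{0,uv} = |V|/(N-1)$ versus $x^v_{0,vu} = |U|/(N-1)$, which differ whenever $|U| \neq |V|$. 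With $V^u_{N-1} = \gamma'$ rather than $\gamma$, your telescope at $r = N-1$ reads $\gamma' = \gamma - \sum_{s \leq N-2} c^u_s$, giving only $\sum_{s \leq N-2} c^u_s = \gamma - \gamma'$; the subsequent claims that each $c^u_s = 0$ for $s \leq N-2$ and that $V^u_r \equiv \gamma$ no longer follow, and your rigid equation $(N-1)\gamma = |V|$ is not established.

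The gap is fillable: rerun your concentration-and-closure step with $u$ and $v$ interchanged (so $U \leftrightarrow V$ and $b \leftrightarrow c$) to obtain $\gamma = \gamma' - \sum_{s \leq N-2} b^v_s \leq \gamma'$; together with $\gamma' \leq \gamma$ from the first pass this forces $\gamma = \gamma'$, and the remainder of your argument then goes through verbatim. Once patched, your route is genuinely different from the paper's. The paper never invokes (\ref{ex1}) or (\ref{ex2}); it uses (\ref{c4}) instead to force $\sum_a x^u_{0,ua} = 1$, pins the bridge traffic directly, and closes with a local contradiction at the single vertex $v$, namely $\sum_r \sum_a x^u_{r,va} \geq |V|/(N-1) + |U|/(N-1) = N/(N-1) > 1$. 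Your argument trades (\ref{c4}) for (\ref{ex1})--(\ref{ex2}) and reaches the contradiction by comparing aggregate $V$-mass at two different starting vertices in $U$; it is a legitimate alternative, provided the $\gamma = \gamma'$ lemma is supplied. (Minor: in step~(ii) you presumably mean to sum (\ref{c1}) over $i \in V$, not $i \in U$.)
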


\begin{proof}Let $(u,v)$ be the bridge in $\Gamma$ as defined above. Then, suppose there exists a feasible solution to (\ref{c1})--(\ref{c6}). From (\ref{c1}), selecting $k = u$, $r = 1$ and $i = v$,
\begin{eqnarray}\sum\limits_{a \in \mathcal{A}(v)} x^u_{1,va} = \sum\limits_{a \in \mathcal{A}(v)} x^u_{0,av}.\label{eq-1}\end{eqnarray}

From (\ref{c5}), selecting $k = u$, it is clear that
\begin{eqnarray}x^u_{0,av} & = & 0, \qquad \forall a \in \mathcal{A}(v), \mbox{when $a \neq u$.}\label{eq-2}\end{eqnarray}

Then, substituting (\ref{eq-2}) into (\ref{eq-1}), we obtain
\begin{eqnarray}\sum\limits_{a \in \mathcal{A}(v)} x^u_{1,va} & = & x^u_{0,uv}.\label{eq-3}\end{eqnarray}

Now, from (\ref{c4}), selecting $r = 0$ and $i = u$,
\begin{eqnarray}\sum\limits_{k=1}^N\sum\limits_{a \in \mathcal{A}(u)} x^k_{0,ua} & = & 1.\label{eq-4}\end{eqnarray}

From (\ref{c5}) it is clear that (\ref{eq-4}) simplifies to
\begin{eqnarray}\sum\limits_{a \in \mathcal{A}(u)} x^u_{0,ua} & = & 1.\label{eq-5}\end{eqnarray}

From (\ref{c3}), selecting $k = u$ and $i = u$,
\begin{eqnarray}\sum\limits_{r=0}^{N-1}\sum\limits_{a \in \mathcal{A}(u)} x^u_{r,ua} & = & 1,\label{eq-6}\end{eqnarray}

and from (\ref{c6}), selecting $k = u$ and $i = u$,
\begin{eqnarray}x^u_{r,ua} & \geq & 0, \qquad \forall r, \quad\forall a \in \mathcal{A}(u).\label{eq-7}\end{eqnarray}

Substituting (\ref{eq-7}) and (\ref{eq-5}) into (\ref{eq-6}), we conclude that
\begin{eqnarray}x^u_{r,ua} & = & 0, \qquad \forall r \geq 1, \quad \forall a \in \mathcal{A}(u).\label{eq-8}\end{eqnarray}

Using identical arguments to the above, but interchanging $u$ and $v$, we similarly conclude that
\begin{eqnarray}x^v_{r,va} & = & 0, \qquad \forall r \geq 1, \quad \forall a \in \mathcal{A}(v).\label{eq-9}\end{eqnarray}

Now, selecting $k = u$ and $i = u$ in (\ref{c1}) and substituting in (\ref{eq-8}), we obtain
\begin{eqnarray}\sum\limits_{a \in \mathcal{A}(u)}x^u_{r,au} & = & 0, \qquad \forall r \leq N-2, \quad\forall a \in \mathcal{A}(u),\label{eq-10}\end{eqnarray}

and by the nonnegativity of all variables we conclude that
\begin{eqnarray}x^u_{r,au} & = & 0, \qquad \forall r \leq N-2, \quad\forall a \in \mathcal{A}(u).\label{eq-11}\end{eqnarray}

Next, we consider the restricted set of vertices $c \in V$. From (\ref{c5}) we know that, since $u \in U$, $x^u_{0,ca} = 0$ for all $c \in V$.

Then, from (\ref{c1}) and (\ref{c5}), selecting $k = u$, $r = 1$, and $i = c$ we obtain

\begin{eqnarray}\sum_{a \in \mathcal{A}(c)} x^u_{1,ca} & = & \sum_{a \in \mathcal{A}(c)} x^u_{0,ac} = x^u_{0,uc}, \quad \forall c \in V.\label{eq-11.5}\end{eqnarray}

Since the only vertex in $V$ that $u$ connects to is $v$, we conclude from (\ref{eq-11.5}) and the nonnegativity of variables that $x^u_{1,ca} = 0$ for all $c \in V \setminus v$. Next, we consider $r \geq 2$. Summing (\ref{c1}) over all $c \in V$, and selecting $k = u$, we obtain
\begin{eqnarray}\sum\limits_{c \in V}\sum_{a \in \mathcal{A}(c)} x^u_{r,ca} & = & \sum\limits_{c \in V}\sum\limits_{a \in \mathcal{A}(c)} x^u_{r-1,ac}, \qquad \forall r \geq 2.\label{eq-12}\end{eqnarray}

The right hand side of (\ref{eq-12}) considers the sum of probabilities over all arcs that go to vertices in $V$ at a given starting vertex $u$ and stage $r-1$. However, with the sole exception of the bridge arc $(u,v)$, all arcs going to a vertex in $V$ originate in $V$. Similarly, with the sole exception of the bridge arc $(v,u)$, all arcs originating in $V$ go to a vertex in $V$. Therefore, the right hand side of (\ref{eq-12}) can alternatively be expressed as the sum of all arcs originating at (rather than going to) vertices in $V$ for the same fixed $k$ and $r$, with the only discrepency occurring at the bridge. Therefore, we can rewrite (\ref{eq-12}) as
\begin{eqnarray}\sum\limits_{c \in V}\sum_{a \in \mathcal{A}(c)} x^u_{r,ca} & = & \left[\sum\limits_{c \in V}\sum\limits_{a \in \mathcal{A}(c)} x^u_{r-1,ca}\right] + x^u_{r-1,uv} - x^u_{r-1,vu}, \qquad \forall r \geq 2.\label{eq-13}\end{eqnarray}

However, from (\ref{eq-8}) and (\ref{eq-11}), we know that both $x^u_{r-1,uv} = x^u_{r-1,vu} = 0$ for $r \geq 2$. Therefore, (\ref{eq-13}) simplifies to
\begin{eqnarray}\sum\limits_{c \in V}\sum\limits_{a \in \mathcal{A}(c)} x^u_{r,ca} & = & \sum\limits_{c \in V}\sum\limits_{a \in \mathcal{A}(c)} x^u_{r-1,ca}, \qquad \forall r \geq 2.\label{eq-14}\end{eqnarray}

Recursively substituting values of $r$ into (\ref{eq-14}) we obtain

\begin{eqnarray}\sum\limits_{c \in V}\sum\limits_{a \in \mathcal{A}(c)} x^u_{r,ca} & = & \sum\limits_{c \in V}\sum\limits_{a \in \mathcal{A}(c)} x^u_{1,ca}, \qquad \forall r \geq 2.\label{eq-14.5}\end{eqnarray}

Now, substituting (\ref{eq-3}) into the right hand side of (\ref{eq-14.5}) and recalling that $x^u_{1,ca} = 0$ for all $c \in V \setminus v$, we obtain
\begin{eqnarray}\sum\limits_{c \in V}\sum\limits_{a \in \mathcal{A}(c)} x^u_{1,ca} & = & x^u_{0,uv},\label{eq-15}\end{eqnarray}

and therefore, substituting (\ref{eq-15}) into (\ref{eq-14.5}), gives
\begin{eqnarray}\sum\limits_{c \in V}\sum\limits_{a \in \mathcal{A}(c)} x^u_{r,ca} & = & x^u_{0,uv}, \qquad \forall r \geq 1.\label{eq-15.5}\end{eqnarray}

Recalling that $x^u_{0,ca} = 0$ for all $c \in V$, we can sum over all values of $r$ in (\ref{eq-15.5}) to obtain
\begin{eqnarray}\sum\limits_{r = 0}^{N-1}\sum\limits_{c \in V}\sum\limits_{a \in \mathcal{A}(c)} x^u_{r,ca} & = & \sum\limits_{c \in V}\sum\limits_{a \in \mathcal{A}(c)} x^u_{0,ca} + \sum_{r = 1}^{N-1}\sum\limits_{c \in V}\sum\limits_{a \in \mathcal{A}(c)} x^u_{r,ca} = (N-1)x^u_{0,uv}.\label{eq-16}\end{eqnarray}

However, rearranging the above equation, we observe from (\ref{c3}) that
\begin{eqnarray}\sum\limits_{c \in V}\sum\limits_{r = 0}^{N-1}\sum\limits_{a \in \mathcal{A}(c)} x^u_{r,ca} & = & |V|.\label{eq-17}\end{eqnarray}

Equating (\ref{eq-16}) and (\ref{eq-17}) we see that
\begin{eqnarray}x^u_{0,uv} = \displaystyle\frac{|V|}{N-1}.\label{eq-18}\end{eqnarray}

Using identical arguments to the above, but interchanging vertices $c \in V$ with vertices $b \in U$, we similarly obtain
\begin{eqnarray}x^v_{0,vu} & = & \displaystyle\frac{|U|}{N-1}.\label{eq-19}\end{eqnarray}

Now, from (\ref{c2}), selecting $k = u$, $j = v$, $i = v$ and $a = u$,
\begin{eqnarray}\sum\limits_{r = 0}^{N-1} x^u_{r,vu} & = & \sum\limits_{r = 0}^{N-1} x^v_{r,vu}.\label{eq-20}\end{eqnarray}

Substituting (\ref{eq-9}) and (\ref{eq-19}) into (\ref{eq-20}), we obtain
\begin{eqnarray}\sum\limits_{r = 0}^{N-1} x^u_{r,vu} & = & x^v_{0,vu} = \displaystyle\frac{|U|}{N-1}.\label{eq-21}\end{eqnarray}

Substituting (\ref{eq-11}) into (\ref{eq-21}), the latter simplifies to
\begin{eqnarray}x^u_{N-1,vu} & = & \displaystyle\frac{|U|}{N-1}.\label{eq-22}\end{eqnarray}

Then, from (\ref{c3}), selecting $k = u$ and $i = v$,
\begin{eqnarray}\sum\limits_{r = 0}^{N-1}\sum\limits_{a \in \mathcal{A}(v)} x^u_{r,va} & = & 1.\label{eq-23}\end{eqnarray}

However, from (\ref{eq-3}), (\ref{eq-18}) and (\ref{eq-22}), we have
\begin{eqnarray}\sum\limits_{r=0}^{N-1}\sum\limits_{a \in \mathcal{A}(v)} x^u_{r,va} \geq \left[\sum\limits_{a \in \mathcal{A}(v)} x^u_{1,va}\right] + x^u_{N-1,vu} & = & \displaystyle\frac{|V|}{N-1} + \displaystyle\frac{|U|}{N-1} = \displaystyle\frac{N}{N-1}.\label{eq-24}\end{eqnarray}

Finally, substituting (\ref{eq-23}) into (\ref{eq-24}) we get $1 \geq \displaystyle\frac{N}{N-1}$. Since $N \geq 2$ this is a contradiction, and therefore no feasible solution to constraints (\ref{c1})--(\ref{c6}) exists for any cubic bridge graph $\Gamma$.\end{proof}

From Proposition \ref{prop-ham_satis} and Theorem \ref{thm-bridge} we know for certain that $\mathcal{P}_b \neq \emptyset$ for cubic Hamiltonian graphs, and $\mathcal{P}_b = \emptyset$ for cubic bridge graphs. However, for other non-Hamiltonian graphs, we have no such definite result. Constraints (\ref{c1})--(\ref{c6}) were implemented in CPLEX 12.3 and were tested on all non-Hamiltonian cubic graphs containing up to 18 vertices, to see if their non-Hamiltonicity could be identified. The results are given in Table \ref{tab-1}. Note that, although we only required constraints (\ref{c1}), (\ref{c2}), and (\ref{c3})--(\ref{c6}) to prove Theorem \ref{thm-bridge}, we still included constraints (\ref{ex1}) and (\ref{ex2}) in the model when computing the following results.

\begin{table}[h!]\scriptsize\begin{center}\begin{tabular}{|c||c|c||c|c|}\hline
$N$ & \# bridge graphs & \# solved & \# non-bridge non-Hamiltonian graphs & \# solved\\
\hline
10 & 1 & 1 & 1 & 0\\
\hline
12 & 4 & 4 & 1 & 0\\
\hline
14 & 29 & 29 & 6 & 1\\
\hline
16 & 186 & 186 & 33 & 6\\
\hline
18 & 1435 & 1435 & 231 & 42\\
\hline \end{tabular}\caption{Numbers of non-Hamiltonian graphs identified by the condition $\mathcal{P}_b = \emptyset$.\label{tab-1}}\end{center}\end{table}

Unsurprisingly, the results in Table \ref{tab-1} correctly identify precisely the same graphs as those reported in Avrachenkov et al \cite{avrachenkov}. As can be seen in the table, some non-bridge non-Hamiltonian graphs were identified by the condition $\mathcal{P}_b = \emptyset$, however the large majority of such graphs remain unidentified, suggesting that $\mathcal{P}_b \setminus Q$ is too large in general. In the following section, we propose a heuristic based on iteratively constructing new polyhedra by augmenting constraints (\ref{c1})--(\ref{c6}) with additional information in order to obtain vastly improved results.

However, first we compare the above results with the famous Dantzig-Fulkerson-Johnson relaxation of the TSP with subtour constraints, applied to HCP. The following model is paraphrased from Cook et al \cite{cook}, where variable $x_{ia}$ corresponds to the flow on the edge $(i,a)$ in a tour.

\begin{eqnarray}\sum\limits_{a \in \mathcal{A}(i)} x_{ia} & = & 2, \qquad \forall i = 1, \hdots, N,\label{eq-tsp1}\\
\sum\limits_{i \in S} \sum\limits_{a \in \mathcal{A}(i), a \not\in S} x_{ia} & \geq & 2, \qquad \forall 1 \leq |S| \leq N-1,\label{eq-tsp2}\\
x_{ia} & = & x_{ai}, \qquad \forall i = 1, \hdots, N;\quad a \in \mathcal{A}(i),\label{eq-tsp3}\\
x_{ia} & \leq & 1, \qquad \forall i = 1, \hdots, N;\quad a \in \mathcal{A}(i),\label{eq-tsp4}\\
x_{ia} & \geq & 0, \qquad \forall i = 1, \hdots, N;\quad a \in \mathcal{A}(i),\label{eq-tsp5}
\end{eqnarray}

where $S$ is a proper subset of the vertices of the graph. Let $\mathcal{P}_c$ denote the feasible region defined by (\ref{eq-tsp1})--(\ref{eq-tsp5}). Note that there are $2^N - 2$ constraints in (\ref{eq-tsp2}). Although cutting plane approaches have been successfully utilised in the past to solve (\ref{eq-tsp1})--(\ref{eq-tsp5}) in polynomial time, for the purposes of the following comparison, we included all of the constraints.

Much like for our base polyhedron $\mathcal{P}_b$, if $\mathcal{P}_c$ is empty, we can conclude that the graph is non-Hamiltonian. Constraints (\ref{eq-tsp1})--(\ref{eq-tsp5}) were implemented in CPLEX 12.3 and tested on the same set of non-Hamiltonian cubic graphs containing up to 18 vertices that were used to produce Table \ref{tab-1}. The results were quite surprising. Exactly the same set of graphs which were identified as being non-Hamiltonian in Table \ref{tab-1} were also identified as being non-Hamiltonian via the DFJ relaxation, and no additional graphs were able to be identified as non-Hamiltonian. Although these results were only obtained over relatively small cubic graphs, they are still striking enough to motivate the following conjecture.

\begin{conjecture}For any cubic graph $\Gamma$, (\ref{c1})--(\ref{c6}) are a polynomial-size set of constraints which are equivalent to the exponential-size set of constraints (\ref{eq-tsp1})--(\ref{eq-tsp5}). That is, $\mathcal{P}_b = \mathcal{P}_c.$\label{conj-basetsp}\end{conjecture}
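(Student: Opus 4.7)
The first step is to fix the meaning of the conjecture, since $\mathcal{P}_b$ and $\mathcal{P}_c$ live in different ambient spaces (the former in $\mathbb{R}^{N^{2}|A|}$, the latter in $\mathbb{R}^{|A|}$). I would interpret the assertion $\mathcal{P}_b=\mathcal{P}_c$ through the natural aggregation map
\[
\pi({\bf x})_{ia}\;:=\;\frac{1}{N}\sum_{k=1}^{N}\sum_{r=0}^{N-1}\bigl(x^{k}_{r,ia}+x^{k}_{r,ai}\bigr),
\]
which sends ${\bf x}_h$ for a Hamiltonian cycle $h$ to the edge-incidence vector of $h$. The claim then reads $\pi(\mathcal{P}_b)=\mathcal{P}_c$, and I would prove it by establishing the two inclusions separately.

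For the forward inclusion $\pi(\mathcal{P}_b)\subseteq\mathcal{P}_c$, let $w:=\pi({\bf x})$ with ${\bf x}\in\mathcal{P}_b$. Symmetry $w_{ia}=w_{ai}$ and nonnegativity follow from the definition and (\ref{c6}). Writing $y_{ia}:=\sum_{k}x^{k}_{r,ia}$, which is independent of $r$ by (\ref{ex2}), one has $w_{ia}=y_{ia}+y_{ai}$; the degree identity $\sum_{a}y_{ia}=1$ from (\ref{c4}) combined with the analogous inflow identity obtained by summing (\ref{c1}) over $k$ yields $\sum_{a}w_{ia}=2$. The bound $w_{ia}\le 1$ is more delicate and does not follow from (\ref{c1})--(\ref{c6}) by a one-line argument; I would attack it as a byproduct of the cut argument below, applied to the two-element set $\{i,a\}$. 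The genuinely difficult constraints are the cut inequalities $\sum_{e\in\delta(S)}w_e\ge 2$ for every proper $S\subset V$. For each $k$ I would introduce $q^{k}_{r}:=\sum_{i\in S}\sum_{b\in\mathcal{A}(i)}x^{k}_{r,ib}$ together with the directional cut flows $f^{k}_{r}:=\sum_{i\in S,\,b\notin S}x^{k}_{r,ib}$ and $g^{k}_{r}:=\sum_{i\notin S,\,b\in S}x^{k}_{r,ib}$. Flow conservation (\ref{c1}) then gives $q^{k}_{r}-q^{k}_{r-1}=g^{k}_{r-1}-f^{k}_{r-1}$; (\ref{c5}) fixes $q^{k}_{0}=\mathbf{1}_{\{k\in S\}}$; and applying (\ref{c3}) to each vertex in $S$ separately forces $\sum_{r}q^{k}_{r}=|S|$. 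A total-variation argument on the sequence $q^{k}_{r}$, summed over $k$ and normalised by $N$, should then deliver $\sum_{e\in\delta(S)}w_e=\frac{1}{N}\sum_{k,r}\bigl(f^{k}_{r}+g^{k}_{r}\bigr)\ge 2$.

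For the reverse inclusion $\mathcal{P}_c\subseteq\pi(\mathcal{P}_b)$, the task is to lift an arbitrary $w\in\mathcal{P}_c$ to some ${\bf x}\in\mathcal{P}_b$ with $\pi({\bf x})=w$. I would proceed by first handling extreme points of $\mathcal{P}_c$, which on cubic graphs are rational with small denominators and, by a standard flow-decomposition argument together with the subtour inequalities, decompose into weighted simple closed walks that together respect the degree-two constraint at every vertex. For each such closed walk I would construct an explicit time-indexed flow $x^{k}_{r,ia}$ using the $N$ cyclic shifts of the walk, so that the equivariance constraints (\ref{ex1}) and (\ref{ex2}) hold by symmetry and the remaining base constraints reduce to bookkeeping. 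A Carath\'eodory-type convex combination then extends the lift to all of $\mathcal{P}_c$.

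The main obstacle is clearly tightening the cut bound in the forward direction: the naive total-variation bound on $q^{k}_{r}$ alone yields only $\sum_{e\in\delta(S)}w_e\ge 4|S||\bar S|/N^{2}$, which is too weak for small or unbalanced cuts. Strengthening it will require exploiting the per-vertex visit-once identity of (\ref{c3}) rather than merely the aggregated mass $q^{k}_{r}$, and the cleanest route is probably LP duality: exhibit explicit nonnegative multipliers for (\ref{c1})--(\ref{c6}) whose weighted sum reproduces the subtour inequality, possibly aided by a max-flow/min-cut computation on the time-expanded directed graph implicit in the $x^{k}_{r,ia}$ variables. A secondary risk is that the conjecture itself may fail, because the paper's empirical support only covers graphs with $N\le 18$ and only establishes feasibility equivalence, not projection equality; a parallel effort of searching small cubic graphs for a $w\in\mathcal{P}_c$ admitting no lift is therefore worth running in tandem, since a single such certificate would force a weaker reformulation of the conjecture.
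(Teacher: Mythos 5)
You should first note that the paper does not prove this statement: it is posed as an open conjecture, supported only by the computational observation that, for all cubic graphs on at most 18 vertices, $\mathcal{P}_b=\emptyset$ exactly when $\mathcal{P}_c=\emptyset$. There is therefore no proof in the paper to compare against, and what you have written is a research plan rather than a proof. Your opening move --- replacing the ill-typed equality $\mathcal{P}_b=\mathcal{P}_c$ by $\pi(\mathcal{P}_b)=\mathcal{P}_c$ for an explicit aggregation map $\pi$ --- is a sensible and necessary formalisation, but note that it is strictly stronger than the emptiness equivalence which is all that the paper's evidence actually supports; a defensible reading of the conjecture demands only $\mathcal{P}_b=\emptyset\iff\mathcal{P}_c=\emptyset$, and you should state explicitly which version you are attacking.

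As a proof, the proposal has two genuine gaps, both of which you flag yourself but neither of which is close to being closed. In the forward direction, the symmetry and degree constraints do follow from (\ref{ex2}), (\ref{c4}) and (\ref{c1}) essentially as you say, but the subtour inequalities are the entire content of the claim, and your total-variation bound on $q^k_r$ provably cannot give the required lower bound of $2$ for unbalanced cuts --- you compute the shortfall yourself. The suggested repair (explicit LP multipliers, or max-flow on the time-expanded graph) is the right kind of idea but is not carried out, and it is exactly where the difficulty lives: one must show the time-indexed flow crosses the cut at least twice in aggregate over $k$, normalised by $N$, which requires using (\ref{c5}) to anchor each indexed walk at its own start vertex and (\ref{c3}) to force it to spend total mass $|S|$ inside $S$. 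The only instance of such an argument the paper actually completes is the bridge case of Theorem \ref{thm-bridge}, and already there it takes a page of careful bookkeeping for the very simplest cut. In the reverse direction the gap is worse: extreme points of the DFJ polytope on cubic graphs can be genuinely fractional (they violate comb inequalities, for instance), and ``decompose into weighted simple closed walks, then use the $N$ cyclic shifts'' only makes sense for an integral Hamiltonian cycle; for a fractional vertex there is no canonical closed walk to shift, and constructing \emph{any} lift satisfying (\ref{c1})--(\ref{c6}) --- in particular the anchoring constraint (\ref{c5}) together with the visit-once constraints (\ref{c3})--(\ref{c4}) --- is an open construction, not bookkeeping. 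Your closing suggestion to search small cubic graphs for a point of $\mathcal{P}_c$ admitting no lift is well taken: since the paper's evidence concerns only emptiness, a counterexample to the projection-equality version would not contradict anything the authors observed.
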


Note that in the above conjecture we assume cubicity, since all of our test graphs were cubic. However, there are no constraints in either model that require or take advantage of cubicity, so it is reasonable to think that the conjecture, if true for cubic graphs, might also be true for a larger set of graphs.

\section{Cubic cracker HCP feasibility heuristic}\label{sec-cc}

The results in the previous section came from a generic model that is defined for all graphs. However, it seems sensible to take advantage of the specific structure of a given graph whenever possible. For this reason, we now investigate the use of cubic crackers to augment the set of constraints that define $\mathcal{P}_b$.

Recall from Definition \ref{def-crackers} that a cubic cracker is an edge cut set of cardinality no greater than three, containing no adjacent edges, such that no proper subset of the edges is an edge cut set. Identifying all cubic crackers in a graph is a simple process of considering all sets of up to three non-adjacent edges, and checking if their removal disconnects the graph. For a cubic graph there are $\displaystyle\frac{3N}{2}$ edges, so the number of such sets of three non-adjacent edges is bounded above by order $O(N^3)$. Confirming the connectedness of a graph is also a polynomial-time process (e.g. see Siek et al \cite{bgl}). Therefore, the process of identifying all cubic crackers is a polynomial-time process.

If no cubic crackers are identified in a cubic graph (that is, the graph is a gene) then no additional constraints are augmented by the approach outlined in this section. If any 1-crackers are identified in a graph, then by definition that graph is a bridge graph, and therefore non-Hamiltonian. Of course, from Theorem \ref{thm-bridge} we know that such graphs will be found to be infeasible by the condition $\mathcal{P}_b = \emptyset$, hence there is no need to solve a further feasibility problem for them.

If a cubic graph contains no 1-crackers, but contains some 2-crackers or 3-crackers (e.g. see Figure \ref{fig-crackers}), then we can augment constraints (\ref{c1})--(\ref{c6}) with new constraints. Since a cubic cracker contains no more than 3 edges, and the removal of these edges disconnects the graph, any Hamiltonian cycle must pass through exactly two of these edges. Starting on one side of the cubic cracker, the other side can only be accessed via an edge in the cubic cracker. Eventually, a second edge of the cubic cracker must be traversed in order to leave this second side and return to the starting vertex. However, once the second side has been exited, it is never returned to again (as there are not enough edges remaining to revisit the second side and still return to the starting vertex). Therefore the entirety of this second side must be traversed before a second edge in the cubic cracker is traversed. This knowledge allows us to impose certain structural demands on any Hamiltonian cycle in a cubic graph containing such cubic crackers (that is, a descendant graph). Call the two components either side of the cracker $U$ and $V$. Then clearly each edge of the cubic cracker joins a vertex in $U$ to a vertex in $V$.

\begin{figure}[h!]\begin{center}\includegraphics[scale=0.35]{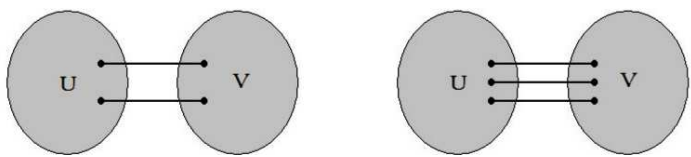} \caption{Graphs containing a 2-cracker and 3-cracker respectively\label{fig-crackers}}\end{center}\end{figure}

Each 2-cracker and 3-cracker in the graph is considered separately to see if their individual structural requirements induce infeasibility. For any given 2-cracker or 3-cracker, all possible selections of 2 edges within the cracker are considered. For 2-crackers there is only one choice, whereas for 3-crackers there are three different ways to select two edges. Then, for each selection of 2 edges, we fix them and demand that a Hamiltonian cycle be found that uses these fixed edges. Of course, the constraints in this manuscript are designed to identify {\em directed} Hamiltonian cycles, so we must select an orientation along the two edges - one edge will be traversed emanating from $U$, and the second edge back towards $U$. However, since all graphs considered in this manuscript are undirected, a directed Hamiltonian cycle in one direction exists if and only if a second directed Hamiltonian cycle traversing the same edges in the opposite direction also exists, and so we can select either orientation over the two selected edges. Since we are selecting orientations of edges, when no confusion is possible, we will simply refer to a selection of two arcs.

The {\em cubic cracker constraints} that will be given shortly make use of the following expression,
\begin{eqnarray}\sum_{r=0}^{N-1} r \sum_{a \in \mathcal{A}(i)} x^k_{r,ia},\label{eq-position_expression}\end{eqnarray}
which has a natural interpretation when the $x$ variables correspond to a Hamiltonian cycle. We know that, for a Hamiltonian cycle starting at vertex $k$, any given vertex $i$ must be departed from at some stage, and then never departed from again. So, for a Hamiltonian cycle, precisely one of the $x^k_{r,ia}$ terms in (\ref{eq-position_expression}) is 1, and the rest are 0. Since the term corresponding to the $r$-th step is multiplied by $r$, we can think of (\ref{eq-position_expression}) as the {\em position of vertex $i$ on the Hamiltonian cycle originating at vertex $k$}.

Suppose now that we have selected arcs $(u_1,v_1)$ and $(v_2,u_2)$ of a cubic cracker to be fixed, where $u_1, u_2 \in U$ and $v_1, v_2 \in V$. Then any Hamiltonian cycle that traverses these two arcs must satisfy the following four conditions:

\begin{enumerate}
  \item Vertex $v_1$ must come directly after $u_1$, unless $v_1$ is the starting vertex;
  \item Vertex $u_2$ must come directly after vertex $v_2$, unless $u_2$ is the starting vertex;
  \item Vertex $v_2$ must come exactly $|V|-1$ positions after $v_1$, unless $v_2$ comes before $v_1$;
  \item Vertex $u_1$ must come exactly $|U|-1$ positions after $u_2$, unless $u_1$ comes before $u_2$.
\end{enumerate}

Of course, there could be many more requirements imposed by the selection of two arcs of a cubic cracker, but we demonstrate shortly that the above four conditions are very powerful even by themselves.

The expression in (\ref{eq-position_expression}) allows us to augment the constraints that define $\mathcal{P}_b$ in order to define a new polytope $\mathcal{P}_c \subset \mathcal{P}_b$ that captures the above four conditions. Note however that $\mathcal{P}_c$ depends on the particular two fixed arcs of the cubic cracker considered, and so for a given graph there may be many such polyhedra $\mathcal{P}_c$. The augmenting constraints are:

\begin{eqnarray}\sum_{r=0}^{N-1} r\left[\sum_{a \in \mathcal{A}(v_1)} x^k_{r,v_1a} - \sum_{a \in \mathcal{A}(u_1)} x^k_{r,u_1a}\right] & = & 1, \qquad \forall k \neq v_1\label{eqc-1}\\
\sum_{r=0}^{N-1} r\left[\sum_{a \in \mathcal{A}(u_2)} x^k_{r,u_2a} - \sum_{a \in \mathcal{A}(v_2)} x^k_{r,v_2a}\right] & = & 1, \qquad \forall k \neq u_2\label{eqc-2}\\
\sum_{r=0}^{N-1} r\left[\sum_{a \in \mathcal{A}(v_2)} x^k_{r,v_2a} - \sum_{a \in \mathcal{A}(v_1)} x^k_{r,v_1a}\right] & = & |V|-1, \qquad \forall k \in U\label{eqc-3}\\
\sum_{r=0}^{N-1} r\left[\sum_{a \in \mathcal{A}(u_1)} x^k_{r,u_1a} - \sum_{a \in \mathcal{A}(u_2)} x^k_{r,u_2a}\right] & = & |U|-1. \qquad \forall k \in V\label{eqc-4}\end{eqnarray}

Note that the conditions on $k$ for constraint (\ref{eqc-3}) ensure that $v_2$ comes after $v_1$, and likewise the conditions on $k$ for constraint (\ref{eqc-4}) ensures that $u_1$ comes after $u_2$.

If, for a particular selection of two arcs of a cubic cracker, $\mathcal{P}_c = \emptyset$, then it is impossible to construct a Hamiltonian cycle that traverses those two arcs of the cubic cracker. If $\mathcal{P}_c = \emptyset$ for all possible selections of two arcs of a cubic cracker, then no Hamiltonian cycle can traverse the cubic cracker at all, and the graph is definitely non-Hamiltonian. If, however, $\mathcal{P}_c \neq \emptyset$ for some selection of 2 arcs of the cubic cracker, the next cubic cracker is considered instead. Only once all cubic crackers present in a graph have been considered, and we have found $\mathcal{P}_c \neq \emptyset$ for some selection of 2 arcs within each of them, is a final result of feasibility returned. Of course, in such a case, we still cannot be certain that the graph is Hamiltonian; however, as will be shown shortly, the incidence of false positives is extremely low.

Empirically, we have found that not all constraints (\ref{c1})--(\ref{c6}) are required for this approach to satisfy the condition $\mathcal{P}_c = \emptyset$. In particular, we found constraints (\ref{c3})--(\ref{c4}) were not required, and their presence in the model did not alter the result once cubic cracker constraints were used in all tested cases. Then, for each selection of 2 arcs $(u_1,v_1)$ and $(v_2,u_2)$ of a cubic cracker, the new feasibility problem solved is:

\begin{eqnarray}\sum_{a \in \mathcal{A}(i)} x^k_{r,ia} - \sum_{a \in \mathcal{A}(i)} x^k_{r-1,ai} & = & 0, \qquad \forall k = 1, \hdots, N; r = 1, \hdots, N-1; i = 1, \hdots, N,\nonumber\\
& & \label{eq3-1}\\
\sum_{a \in \mathcal{A}(i)} x^k_{r,ia} - \sum_{a \in \mathcal{A}(k)} x^i_{N-r,k,a} & = & 0, \qquad \forall k = 1, \hdots, N; r = 1, \hdots, N-1; i = 1, \hdots, N,\nonumber\\
& & \label{eq3-2}\\
\sum_{r = 0}^{N-1} x^k_{r,ia} - \sum_{r = 0}^{N-1} x^j_{r,ia} & = & 0, \qquad \forall j,k = 1, \hdots, N, j \neq k; (i,a) \in \Gamma,\label{eq3-3}\\
\sum_{k = 1}^N x^k_{r,ia} - \sum_{k = 1}^N x^k_{s,ia} & = & 0, \qquad r,s = 0, \hdots, N-1, r \neq s; (i,a) \in \Gamma,\label{eq3-3.5}\\
x^k_{0,ia} & = & 0, \qquad \forall k = 1, \hdots, N; i \neq k; (i,a) \in \Gamma,\label{eq3-4}\end{eqnarray}
\begin{eqnarray}
\sum_{r=0}^{N-1} r\left[\sum_{a \in \mathcal{A}(v_1)} x^k_{r,v_1,a} - \sum_{a \in \mathcal{A}(u_1)} x^k_{r,u_1a}\right] & = & 1, \qquad \forall k \neq v_1\label{eq3-5}\\
\sum_{r=0}^{N-1} r\left[\sum_{a \in \mathcal{A}(u_2)} x^k_{r,u_2a} - \sum_{a \in \mathcal{A}(v_2)} x^k_{r,v_2a}\right] & = & 1, \qquad \forall k \neq u_2\label{eq3-6}\\
\sum_{r=0}^{N-1} r\left[\sum_{a \in \mathcal{A}(v_2)} x^k_{r,v_2a} - \sum_{a \in \mathcal{A}(v_1)} x^k_{r,v_1a}\right] & = & |V|-1, \qquad \forall k \in U\label{eq3-7}\\
\sum_{r=0}^{N-1} r\left[\sum_{a \in \mathcal{A}(u_1)} x^k_{r,u_1a} - \sum_{a \in \mathcal{A}(u_2)} x^k_{r,u_2a}\right] & = & |U|-1, \qquad \forall k \in V\label{eq3-8}\end{eqnarray}
\begin{eqnarray}
x^k_{r,ia} & \geq  & 0, \qquad \forall k = 1, \hdots, N; r = 0, \hdots, N-1; (i,a) \in \Gamma.\label{eq3-9}
\end{eqnarray}

We refer to this systematic approach to identifying non-Hamiltonicity by testing the condition that $\mathcal{P}_c = \emptyset$ over all selections of 2 arcs for each cubic cracker as the {\em cubic cracker HCP feasibility heuristic (CHFH)}. Although the approach of considering individual crackers in isolation is weaker than considering all cubic crackers in conjunction, it is taken in order to ensure the number of feasibility problems considered remains bounded polynomially. A cubic graph may contain many 3-crackers, and for each of these, three orientations need to be considered. For example, if a graph contained $m$ 3-crackers, considering each selection of 2 arcs of the 3-crackers in conjunction would require $3^m$ feasibility problems, whereas considering them separately requires only $3m$ feasibility problems.

CHFH was implemented in CPLEX 12.3 and all non-Hamiltonian cubic graphs containing up to 18 vertices were tested. The results are given in Table \ref{tab-3}, where the \lq\lq Improvement" column refers to the number of correctly identified non-bridge non-Hamiltonian graphs that were not so identified by the model in Section \ref{sec-bm}.

\begin{table}[h!]\scriptsize\begin{center}\begin{tabular}{|c||c|c||c|c|c|}\hline
$N$ & \# bridge graphs & \# solved & \# non-bridge non-Hamiltonian graphs & \# solved & Improvement\\
\hline
10 & 1 & 1 & 1 & 0 & 0\\
\hline
12 & 4 & 4 & 1 & 1 & 1\\
\hline
14 & 29 & 29 & 6 & 6 & 5\\
\hline
16 & 186 & 186 & 33 & 33 & 27\\
\hline
18 & 1435 & 1435 & 231 & 228 & 186\\
\hline \end{tabular}\caption{Numbers of non-Hamiltonian graphs identified by CHFH.\label{tab-3}}\end{center}\end{table}

As can be seen in Table \ref{tab-3}, almost all non-Hamiltonian graphs are identified by CHFH. In fact, only a single non-Hamiltonian descendant returns feasibility in all the graphs tested. The other non-Hamiltonian graphs that return feasibility are the three mutants (non-Hamiltonian genes) containing up to 18 vertices. This represents only four graphs from the total set of 1,927 non-Hamiltonian connected cubic graphs containing up to 18 vertices that this approach is unable to identify. Although we provide no formal results to prove that this high success rate should continue as larger graphs are considered, the empirical results to this stage are very compelling.

Once again, we chose the Dantzig-Fulkerson-Johnson relaxation of the TSP with subtour constraints to benchmark the above results. In a similar manner to CHFH, we constructed a cubic cracker-based heuristic for constraints (\ref{eq-tsp1})--(\ref{eq-tsp5}). The heuristic was similar to CHFH in that we first identified all cubic crackers in a graph, and then for each one found, we iteratively considered all possible choices of fixing two edges to be traversed. For 2-crackers, the subtour constraints (\ref{eq-tsp2}) ensure that this occurs anyway, but in the case of 3-crackers, this was achieved by simply removing the unused edge from the model in each iteration. This process is equivalent to enforcing constraints (\ref{eq3-5})--(\ref{eq3-6}). However, since there is no way to specify at which step vertices are visited in the DFJ relaxation, there are no equivalent constraints to (\ref{eq3-7})--(\ref{eq3-8}).

This heuristic was also implemented in CPLEX 12.3 and all non-Hamiltonian cubic graphs containing up to 18 vertices were tested. The results are given in Table \ref{tab-4}.

\begin{table}[h!]\scriptsize\begin{center}\begin{tabular}{|c||c|c||c|c|c|}\hline
$N$ & \# bridge graphs & \# solved & \# non-bridge non-Hamiltonian graphs & \# solved & Improvement\\
\hline
10 & 1 & 1 & 1 & 0 & 0\\
\hline
12 & 4 & 4 & 1 & 0 & 0\\
\hline
14 & 29 & 29 & 6 & 1 & 0\\
\hline
16 & 186 & 186 & 33 & 6 & 0\\
\hline
18 & 1435 & 1435 & 231 & 46 & 4\\
\hline \end{tabular}\caption{Numbers of non-Hamiltonian graphs identified by the cubic cracker-based heuristic built on the DFJ relaxation.\label{tab-4}}\end{center}\end{table}

As can be seen, the cubic cracker-based heuristic built on the DFJ relaxation does identify a few more non-Hamiltonian graphs than the standard DFJ relaxation (specifically, four additional 18-vertex graphs), but the majority of them remain unidentified. It seems that the additional flexibility offered by the variables used in CHFH permits a far greater number of tested non-Hamiltonian graphs to be identified. These results demonstrate that although cubic crackers contain very useful information, how best to take advantage of that information also requires careful consideration.

The one descendant graph that provided a false positive for CHFH can be seen in Figure \ref{fig-18_4652}. It contains 18 vertices. When generated by GENREG \cite{genreg} using the command {\texttt{genreg 18 3 3}} it has the ID \#4652. For this reason we refer to it as $\Gamma^{18}_{4652}$. It contains a single 2-cracker, and no other cubic crackers.

\vspace*{1cm}\begin{figure}[h!]\begin{center}\includegraphics[scale=0.35]{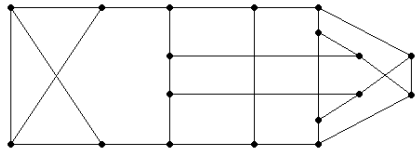}\caption{Non-Hamiltonian 18-vertrex graph $\Gamma^{18}_{4652}$.\label{fig-18_4652}}\end{center}\end{figure}

The other three non-Hamiltonian cubic graphs that return feasibility for CHFH are well-known in literature. They are the Petersen graph, and the two Blanu\u{s}a snarks, which are displayed in Figure \ref{fig-snarks}. CHFH has little opportunity to identify these graphs as non-Hamiltonian because they contain no cubic crackers. However, there is a logical approach that can be pursued to attempt to identify such graphs as non-Hamiltonian. For any cubic graph $\Gamma$, a second graph $\Gamma_2$ can be formed by replacing a vertex $v \in \Gamma$ by a triangle, such that for each edge in $\Gamma$ that is adjacent to $v$, there is an equivalent edge in $\Gamma_2$ that is adjacent to one of the vertices of the triangle\footnote{The operation of replacing a vertex by a triangle is known in engineering fields as a Y--$\triangle$ transform \cite{kennelly}.}. Then, $\Gamma_2$ is Hamiltonian if and only if $\Gamma$ is Hamiltonian. However, the introduction of a triangle creates a 3-cracker in $\Gamma_2$, and so CHFH can be used for $\Gamma_2$. If $\Gamma_2$ is found to be non-Hamiltonian by CHFH, then $\Gamma$ is certainly non-Hamiltonian as well. This approach was tested for the Petersen graph and the two Blanu\u{s}a snarks, with the triangle replacing the first vertex in each graph as produced by GENREG \cite{genreg}. In two of the three cases, specifically those of the Petersen graph and the first Blanu\u{s}a snark, CHFH was able to identify the descendants obtained as non-Hamiltonian. Only for the second Blanu\u{s}a snark did CHFH fail to identify the descendant obtained as non-Hamiltonian via this approach. Therefore, using this systematic approach, the Hamiltonicity of only two graphs (out of 45,982 connected cubic graphs tested) is not correctly identified.

\vspace*{1cm}\begin{figure}[h!]\begin{center}\includegraphics[scale=0.5]{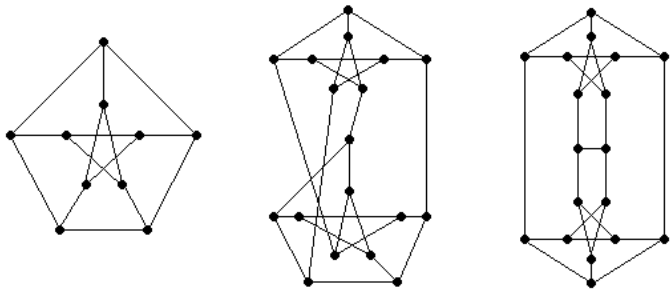}\caption{The three smallest snarks - the Petersen graph, and the two Blanu\u{s}a snarks.\label{fig-snarks}}\end{center}\end{figure}

Of course, it is entirely possible that there are additional constraints that could be used in CHFH that may result in the correct identification of these two exceptional cases. In the appendix we list all the additional constraints we have used to date, but note that these constraints did not result in the successful identification of $\Gamma^{18}_{4652}$ nor the second Blanu\u{s}a snark as non-Hamiltonian graphs by this approach.

\section*{Appendix: Additional constraints}

We separate the appendix into three smaller sections; extra constraints that could be applied in the base feasibility program, constraints in new variables that represent arcs in a reverse Hamiltonian cycle and arcs not selected in a Hamiltonian cycle, and additional cubic cracker constraints. It should be noted that in all tested graphs, augmenting these constraints to CHFH does not alter the output of CHFH, however it is possible that it would do so for some larger cubic graphs. This appendix is not intended as an exhaustive list of such constraints that could be used, but rather a guide to interested readers as to which constraints have already been tested. Note that some of these constraints may be redundant given other constraints. A short interpretation is given after each constraint.

Also note that in the following additional constraints we occasionally use $\mu^i_j$, defined to be the length of the shortest path from vertex $i$ to vertex $j$. These can be computed, in advance, in quadratic time using Dijkstra's algorithm.

{\underline{\textbf{Basic constraints:}}}

\begin{eqnarray}\sum_{i=1}^N\sum_{a \in \mathcal{A}(i)} x^k_{r,ia} & = & 1, \qquad \forall k = 1, \hdots, N; r = 0, \hdots, N-1.\end{eqnarray}
Interpretation: For any given starting vertex, at any given stage, exactly one arc must be selected.
\begin{eqnarray}x^k_{r,ka} & = & 0, \qquad \forall k = 1, \hdots, N; r = 1, \hdots, N-1; a \in \mathcal{A}(k).\end{eqnarray}
Interpretation: The starting vertex can not be departed from again after the initial step.
\begin{eqnarray}x^k_{r,bk} & = & 0, \qquad \forall k = 1, \hdots, N; r = 0, \hdots, N-2; b \in \mathcal{A}(k).\end{eqnarray}
Interpretation: The starting vertex can not be reentered until the final step in the Hamiltonian cycle.
\begin{eqnarray}x^k_{r,ia} & = & 0, \qquad \forall k = 1, \hdots, N; (i,a) \in \Gamma, r < \mu_i^k\mbox{ or } r > N - \mu_a^k - 1.\end{eqnarray}
Interpretation: An edge $(i,a)$ cannot be traversed in a certain step unless it is both possible to reach the vertex $i$ by that time step, and then possible to return to the starting vertex from vertex $a$ in the remaining time steps.
\begin{eqnarray}\sum_{r=0}^{N-1} r \left[\sum_{a \in \mathcal{A}(i)} x^k_{r,ia} + \sum_{a \in \mathcal{A}(j)} x^k_{r,ja}\right] & \geq & 2\min\left(\mu^k_i,\mu^k_j\right) + \mu^i_j,\nonumber\\
 & & \;\forall k = 1, \hdots, N; i,j = 1, \hdots, N \setminus k.\end{eqnarray}
Interpretation: Let vertices $i$ and $j$ be on a Hamiltonian cycle starting at vertex $k$. One must be reached before the other. Then, once the closer vertex is reached, the further vertex cannot be reached in fewer steps than the shortest path between vertices $i$ and $j$.

{\underline{\textbf{New variables:}}}

In this category we introduce two sets of new variables denoted by $y^k_{r,ia}$ and $s^k_{ia}$. The former, $y^k_{r,ia}$ are equivalent to the variables in ${\bf x}$ used so far, except that it corresponds to the {\em reverse Hamiltonian cycle}. That is, $y^k_{r,ia} = 1$ if arc $(i,a)$ is the $r$-th step in the reverse Hamiltonian cycle originating at vertex $k$, and $y^k_{r,ia} = 0$ otherwise. The intention is that the Hamiltonian cycles traced out by the variables in ${\bf x}$ and ${\bf y}$ are identical except for their direction. Obviously then, $x^k_{r,ia} = y^k_{N-r-1,ai}$. Every constraint given so far for the variables in ${\bf x}$ should also be applied to the variables in ${\bf y}$. In addition, we supply sets of coupling constraints below.

Since we consider cubic graphs in this manuscript, from any given vertex there are three arcs emanating. For a Hamiltonian cycle, one will be selected for the variables in ${\bf x}$ at some stage $r$, a second will be selected for the variables in ${\bf y}$ at stage $N-r-1$, and a third is not selected at any stage. The new variable $s^k_{ia}$ corresponds to the arc never selected in the following manner: $s^k_{ia} = 0.5$ if arc $(i,a)$ is not selected in the Hamiltonian cycle or the reverse Hamiltonian cycle and either $k = i$ or $k = a$, and $s^k_{ia} = 0$ otherwise. Note that for the $s^k_{ia}$ variables, no subscript corresponding to a stage is required, however a superscript corresponding to the starting vertex is used to enable the simple construction of coupling constraints between the variables in ${\bf x}$, ${\bf y}$ and ${\bf s}$. Note that for a given $k$, exactly two $s^k_{ia}$ are non-zero.

The following constraints involve the new (and sometimes old) variables.

\begin{eqnarray}\sum_{a \in \mathcal{A}(i)} x^k_{r,ia} - \sum_{a \in \mathcal{A}(i)} y^k_{N-r,ia} & = & 0, \qquad \forall k = 1, \hdots, N; r = 1, \hdots, N-1; i = 1, \hdots, N.\nonumber\\
& & \end{eqnarray}
Interpretation: If a vertex is departed from at stage $r$ in a Hamiltonian cycle, then it must be entered at stage $N-r-1$ in the reverse Hamiltonian cycle, and therefore departed from at stage $N-r$ in the reverse Hamiltonian cycle.
\begin{eqnarray}\sum_{r=0}^{N-1} r \left[\sum_{a \in \mathcal{A}(i)} x^k_{r,ia} + \sum_{a \in \mathcal{A}(i)} y^k_{r,ia}\right] & = & N, \qquad \forall k = 1, \hdots, N; i = 1, \hdots, N; i \neq k.\nonumber\\
& & \end{eqnarray}
Interpretation: If the position of any vertex (other than the starting vertex) on a Hamiltonian cycle is $\ell$, then the position of the same vertex on the reverse Hamiltonian cycle must be $N-\ell$, and so their sum is $N$.
\begin{eqnarray}\sum_{k=1}^N x^k_{r,ia} + \sum_{k=1}^N y^k_{r,ia} & \leq & 1, \qquad \forall r = 0, \hdots, N-1; (i,a) \in \Gamma.\label{eq-newvar}\end{eqnarray}
Interpretation: If an arc $(i,a)$ is used at stage $r$ in a Hamiltonian cycle or reverse Hamiltonian cycle, it can only be so in one of them and for only one starting vertex. Of course, arc $(i,a)$ may be used by neither, in which case the left hand side of (\ref{eq-newvar}) is 0.
\begin{eqnarray}\sum_{a \in \mathcal{A}(i)} s^k_{ia} - \sum_{a \in \mathcal{A}(i)} s^k_{ai} & = & 0, \qquad \forall k = 1, \hdots, N; i = 1, \hdots, N,\\
\sum_{k=1}^N\sum_{a \in \mathcal{A}(i)} s^k_{ia} & = & 1, \qquad \forall i = 1, \hdots, N,\\
\sum_{i=1}^N\sum_{a \in \mathcal{A}(i)} s^k_{ia} & = & 1, \qquad \forall k = 1, \hdots, N.\end{eqnarray}
Interpretation: For any given starting vertex, there is a single incoming and a single outgoing arc not used in either Hamiltonian cycle.
\begin{eqnarray}s^k_{ia} & = & 0, \qquad \forall k = 1, \hdots, N; (i,a) \in \Gamma; \mu^k_i > 1.\end{eqnarray}
Interpretation: Ensures that we only consider legal arcs in the $s$ variables.
\begin{eqnarray}\sum_{a \in \mathcal{A}(k)} s^k_{ka} & = & 0.5, \qquad \forall k = 1, \hdots, N,\\
\sum_{a \in \mathcal{A}(i)} s^k_{ia} & \leq & 0.5, \qquad \forall k = 1, \hdots, N; i = 1, \hdots, N; i \neq k.\end{eqnarray}
Interpretation: For a given vertex, there is a single outgoing arc not used in either the forward or the reverse Hamiltonian cycle.
\begin{eqnarray}x^j_{r,ia} + y^k_{r,ia} & \leq & 1, \qquad \forall k = 1, \hdots, N; \forall j = 1, \hdots, N; r = 0, \hdots, N-1; (i,a) \in \Gamma.\nonumber\\
& & \end{eqnarray}
Interpretation: A given arc cannot appear in both the Hamiltonian cycle and the reverse Hamiltonian cycle.
\begin{eqnarray}\sum_{r=0}^{N-1} x^k_{r,ia} - \sum_{r=0}^{N-1} y^k_{r,ai} & = & 0, \qquad \forall k = 1, \hdots, N; (i,a) \in \Gamma.\end{eqnarray}
Interpretation: If an arc is selected in a Hamiltonian cycle at some stage, its reverse must be selected in the reverse Hamiltonian cycle at some stage.
\begin{eqnarray}\sum_{k=1}^N x^k_{r,ia} + \sum_{k=1}^N y^k_{r,ia} + \sum_{k=1}^N s^k_{ia} & = & 1, r = 0, \hdots, N-1; \qquad \forall (i,a) \in \Gamma.\end{eqnarray}
Interpretation: Any arc in the graph is either used in a Hamiltonian cycle, a reverse Hamiltonian cycle, or not at all.

{\underline{\textbf{Cracker constraints:}}}

Consider a cubic cracker that divides a graph into two components $U$ and $V$, containing $|U|$ and $|V|$ vertices respectively. Assume, without loss of generality, that $|U| \leq |V|$. Suppose two arcs $(u_1,v_1)$ and $(v_2,u_2)$ from the cubic cracker are selected to be traversed by a Hamiltonian cycle. The following additional cracker constraints can be applied for this selection of two arcs. In the following, $\delta^i_j$ is the Kronecker delta that is equal to 1 if $i = j$, and 0 otherwise.

\begin{eqnarray}\sum_{r=0}^{N-1} r \sum_{a \in \mathcal{A}(u_1)} x^{v_1}_{r,u_1a} & = & N-1,\\
\sum_{r=0}^{N-1} r \sum_{a \in \mathcal{A}(w_2)} x^{u_2}_{r,v_2a} & = & N-1.\end{eqnarray}
Interpretation: A Hamiltonian cycle starting at the head vertex of a cracker arc must visit the tail vertex of the cracker after $N-1$ steps.
\begin{eqnarray}x^k_{r,u_1a} & = & 0, \qquad \forall k = 1, \hdots, N; r = 0, \hdots, N-1; a \neq v_1,\\
x^k_{r,v_2a} & = & 0, \qquad \forall k = 1, \hdots, N; r = 0, \hdots, N-1; a \neq u_2.\end{eqnarray}
Interpretation: The tail vertex of a cracker arc must be followed by the head vertex of the cracker arc.
\begin{eqnarray}x^k_{r,v_1u_1} & = & 0, \qquad \forall k = 1, \hdots, N; r = 0, \hdots, N-1,\\
x^k_{r,u_2v_2} & = & 0, \qquad \forall k = 1, \hdots, N; r = 0, \hdots, N-1.\end{eqnarray}
Interpretation: A forward Hamiltonian cycle is not permitted to travel backwards over a fixed cracker arc.
\begin{eqnarray}x^{u_1}_{r,u_1v_1} & = & \delta^r_0, \qquad \forall r = 0, \hdots, N-1,\\
x^{v_2}_{r,v_2u_2} & = & \delta^r_0, \qquad \forall r = 0, \hdots, N-1.\end{eqnarray}
Interpretation: Starting at the head vertex of a cracker arc, the first step must be to cross the cracker, and then that cracker arc should not be crossed again.
\begin{eqnarray}x^{u_1}_{r,v_1a} & = & \delta^r_1, \qquad \forall r = 0, \hdots, N-1; a \in \mathcal{A}(v_1),\\
x^{v_2}_{r,u_2a} & = & \delta^r_1, \qquad \forall r = 0, \hdots, N-1; a \in \mathcal{A}(u_2).\end{eqnarray}
Interpretation: Starting at the tail vertex of a cracker arc, the head vertex of the cracker arc should only be departed from at stage 1.
\begin{eqnarray}x^{v_1}_{r,u_1v_1} & = & \delta^r_{N-1}, \qquad \forall r = 0, \hdots, N-1,\\
x^{u_2}_{r,v_2u_2} & = & \delta^r_{N-1}, \qquad \forall r = 0, \hdots, N-1.\end{eqnarray}
Interpretation: Starting at the head vertex of a cracker arc, the tail vertex of the cracker arc should only be departed from at stage $N-1$.
\begin{eqnarray}x^{u_1}_{r,v_2u_2} & = & \delta^r_{|V|}, \qquad \forall r = 0, \hdots, N-1,\\
x^{v_2}_{r,u_1v_1} & = & \delta^r_{|U|}, \qquad \forall r = 0, \hdots, N-1,\\
\sum_{r=0}^{N-1} r \left[\sum_{a \in \mathcal{A}(u_1)} x^k_{r,u_1a} - \sum_{a \in \mathcal{A}(v_2)} x^k_{r,v_2a}\right] & = & |U|, \qquad \forall k \in V,\\
\sum_{r=0}^{N-1} r \left[\sum_{a \in \mathcal{A}(u_1)} x^k_{r,u_1a} - \sum_{a \in \mathcal{A}(v_2)} x^k_{r,v_2a}\right] & = & -|V|, \qquad \forall k \in U,\\
x^k_{r,u_1v_1} - x^k_{r+|V|,v_2u_2} & = & 0, \qquad \forall k \in U; r = 0, \hdots, N-1-|V|,\nonumber\\
& & \\
x^k_{r,v_2u_2} - x^k_{r+|U|,u_1v_1} & = & 0, \qquad \forall k \in V; r = 0, \hdots, N-1-|U|.\nonumber\\
& & \end{eqnarray}
Interpretation: The distance between the two head vertices of the two selected cracker arcs should be equal to the number of vertices in the component that must be traversed.
\begin{eqnarray}\sum_{r=0}^{N-1} r \left[\sum_{i \in U}\sum_{a \in \mathcal{A}(i)} x^k_{r,ia} - |U|\sum_{a \in \mathcal{A}(u_2)} x^k_{r,u_2a}\right] & = & \displaystyle\frac{|U|\left(|U|-1\right)}{2}, \qquad \forall k \in V,\\
\sum_{r=0}^{N-1} r \left[\sum_{i \in V}\sum_{a \in \mathcal{A}(i)} x^k_{r,ia} - |V|\sum_{a \in \mathcal{A}(v_1)} x^k_{r,v_1a}\right] & = & \displaystyle\frac{|V|\left(|V|-1\right)}{2}, \qquad \forall k \in U.\end{eqnarray}
Interpretation: Starting from any vertex in one component, a Hamiltonian cycle must first go to the cracker arc that exits the component, then visit every vertex in the other component before reaching the other cracker arc.
\begin{eqnarray}x^k_{r,ia} & = & 0, \qquad \forall k \in U; i \in U; |U| \leq r \leq |V|.\end{eqnarray}
Interpretation: If a Hamiltonian cycle originates in the smaller component, there must be some number of stages in the middle of the cycle when it cannot inhabit the
smaller component in order to visit all the vertices in the larger component.

\end{document}